\let\@fnsymbol\@arabic
\def\blfootnote{\gdef\@thefnmark{}\@footnotetext}
\definecolor{darkblue}{rgb}{0.0, 0.0, 0.8}
\newcommand{\PreserveBackslash}[1]{\let\temp=\\#1\let\\=\temp}
\newcolumntype{C}[1]{>{\PreserveBackslash\centering}p{#1}}
\newcolumntype{R}[1]{>{\PreserveBackslash\raggedleft}p{#1}}
\newcolumntype{L}[1]{>{\PreserveBackslash\raggedright}p{#1}}
\newtheorem{thm}{Theorem}
\newtheorem{lemma}[thm]{Lemma}
\newtheorem{proposition}[thm]{Proposition}
\newcommand{\R}{\mathbb{R}}
\newcommand{\Z}{\mathbb{Z}}
\newcommand{\vol}{\operatorname{vol}}
\newcommand{\dd}{\,{\mathrm d}}
\newcommand{\db}{{\mathrm d}}
\newcommand{\Db}{{\mathrm D}}
\title{Brownian motion on spaces of discrete regular curves\blfootnote{{\it $2020$ Mathematics Subject Classification.} 58J65, 60J65, 62R30}\blfootnote{{\it Key words and phrases.} shape analysis, Sobolev metrics, Brownian motion, stochastic completeness, numerics}}
\author{Karen Habermann\,\footnote{Department of Statistics, University of Warwick, Coventry, CV4 7AL, United Kingdom.\\ {\it Email address:} {\tt karen.habermann@warwick.ac.uk}} , Emmanuel Hartman\,\footnote{Department of Mathematics, University of Houston, Houston, TX, USA\\ {\it Email address:} {\tt ehartma2@CougarNet.UH.edu}}}
\begin{document}
\maketitle
\begin{abstract}
We introduce and study Brownian motion on spaces of discrete regular curves in Euclidean space equipped with discrete Sobolev-type metrics. It has been established that these spaces of discrete regular curves are geodesically complete if and only if the Sobolev-type metric is of order two or higher. By relying on a general result by Grigor'yan and controlling the volume growth of geodesic balls, we show that all spaces of discrete regular curves that are geodesically complete are also stochastically complete, that is, the associated Brownian motion exists for all times. This provides a rigorous footing for performing data statistics, such as data inference and data imputation, on these spaces. Our result is the first stochastic completeness result in shape analysis that applies to the full shape space of interest. For illustrative purposes, we include simulations for sample paths of Brownian motion on spaces of discrete regular curves. For the space of triangles in the plane modulo rotation, translation and scaling, we further provide heuristics which suggest that this space remains stochastically complete even for Sobolev-type metrics of order zero and one.
\end{abstract}

\section{Introduction}
In shape analysis, we frequently consider spaces of parameterized shapes as infinite-dimensional Riemannian manifolds and study the geometry of these spaces, see e.g.~\cite{BMM23,bruveris2015completeness,bruveris2014geodesic,MM06,michor2007overview}. However, in practice, we must discretize these infinite-dimensional spaces through various schemes, resulting in finite-dimensional analogs of the considered spaces which are endowed with Riemannian metrics that emulate the metric on the infinite-dimensional space.

In recent works, such as \cite{beutler2025discretegeodesiccalculusspace,cerqueira2024sobolevmetricsspacesdiscrete} some effort has been devoted to studying the geometry of these finite-dimensional manifolds to show consistency with known results in the infinite-dimensional setting as well as to provide insights into properties of the finite-dimensional space which are unknown in the infinite-dimensional setting.

In the present work, we obtain a stochastic completeness result for a discrete version of the space of closed immersed curves in $\R^d$ equipped with the reparametrization invariant Sobolev metric of order $m\in\Z_{\geq 0}$. This ensures long-time existence of Brownian motion on these spaces. It was raised in~\cite{cerqueira2024sobolevmetricsspacesdiscrete} as a problem of interest. The only previously known stochastic completeness result in shape analysis concerns the configuration space of exactly two landmarks in $\R^d$ and was established in~\cite{habermann2024long}.

\medskip

The space $\operatorname{Imm}(S^1,\R^d)$ of closed immersed curves in $\R^d$ for $d\geq 2$ is an infinite-dimensional Fr\'{e}chet manifold that can be equipped with the reparametrization invariant Sobolev metric $G^m$ of order $m\in\Z_{\geq 0}$. For $c\in \operatorname{Imm}(S^1,\R^d)$ and $h,k\in T_c\operatorname{Imm}(S^1,\R^d)\cong C^\infty(S^1,\R^d)$, this metric is defined in terms of arc length integration by
\begin{displaymath}
    G_c^m(h,k)= \int_{S^1} \left(\frac{\langle h,k \rangle}{l(c)^3}+\frac{\langle \Db_s^m h,\Db_s^m k\rangle}{l(c)^{3-2m}} \right)\db s,
\end{displaymath}
where $\langle\cdot,\cdot\rangle$ denotes the Euclidean inner product on $\R^d$, $l(c)$ denotes the length of the curve $c$ and $\Db_s^m$ denotes the $m$th-order derivative with respect to arc length. For further details, see e.g.~\cite{bruveris2015completeness,bruveris2014geodesic,cerqueira2024sobolevmetricsspacesdiscrete,michor2007overview}.

As a finite-dimensional analog of the space of closed immersed curves, we consider the space of closed piecewise linear curves with $n\geq 3$ breakpoints, which is exactly determined by the space $\R^{d\times n}$ of $n$ ordered points in $\R^d$, but where we disallow two adjacent vertices from coinciding. The latter emulates the immersion property of $\operatorname{Imm}(S^1,\R^d)$, resulting in the discrete space
\begin{displaymath}
    \R_*^{d\times n}= \left\{(v_0,\dots,v_{n-1}) \in \R^{d\times n} : v_i\neq v_{i+1} \text{ for all }i\in \Z/n\Z\right\}.
\end{displaymath}
Since this is an open subset of $\R^{d\times n}$, we have $T_v\R_*^{d\times n} = \R^{d\times n}$ for $v\in \R_*^{d\times n}$.
We denote the edges of a piecewise linear curve determined by $v\in \R_*^{d\times n}$ as $e_i(v) = v_{i+1}-v_i$ for all $i\in \Z/n\Z$. The total length of the piecewise linear curve $v$ is then given by $l(v)=\sum_{i=0}^{n-1}|e_i(v)|$, where $|\cdot|$ denotes the Euclidean norm on $\R^d$. We further define a discretization of the $m$th-order arc length derivative of a tangent vector $h\in T_v\R_*^{d\times n}$ by first setting $\Db_s^0 h = h$ and then recursively through, for all $i\in \Z/n\Z$,
\begin{displaymath}
    \Db_s^m h_i = \begin{cases} \dfrac{\Db_s^{m-1}h_{i+1} - \Db_s^{m-1}h_{i}}{|e_i(v)|}& \text{if } m \text{ is odd,}\\[1em] \dfrac{\Db_s^{m-1}h_{i} - \Db_s^{m-1}h_{i-1}}{\frac{1}{2}\left(|e_i(v)|+|e_{i-1}(v)|\right)} & \text{if } m \text{ is even.} \end{cases}
\end{displaymath}
With this at hand, we can define the discrete version $g^m$, introduced in~\cite{cerqueira2024sobolevmetricsspacesdiscrete}, of the reparametrization invariant Sobolev metric $G^m$ of order $m\in\Z_{\geq 0}$. Given $v\in \R_*^{d\times n}$ and $h,k\in  T_v\R_*^{d\times n}$, we set
\begin{displaymath}
    g_v^m(h,k)=\sum_{i=0}^{n-1}\left(\frac{\langle h_i,k_i\rangle}{l(v)^3} \cdot \frac{|e_i(v)|+|e_{i-1}(v)|}{2} + \frac{\langle \Db_s^m h_i,\Db_s^m k_i\rangle}{l(v)^{3-2m}}\cdot\mu_i\right),
\end{displaymath}
where
\begin{displaymath}
    \mu_i =\begin{cases}
        |e_i(v)|& \text{if } m \text{ is odd,} \\[1em]
        \dfrac{|e_i(v)|+|e_{i-1}(v)|}{2}& \text{if } m \text{ is even.}
    \end{cases}
\end{displaymath}
It was shown in~\cite{cerqueira2024sobolevmetricsspacesdiscrete} that $g^m$ indeed defines a Riemannian metric on $\R_*^{d\times n}$ for $m\in\Z_{\geq 0}$ and $n\geq 3$, see~\cite[Lemma 3.4]{cerqueira2024sobolevmetricsspacesdiscrete}, and that this discretization recovers the reparametrization invariant Sobolev metric $G^m$ on $\operatorname{Imm}(S^1,\R^d)$ in the limit $n\to\infty$, see~\cite[Proposition 3.7]{cerqueira2024sobolevmetricsspacesdiscrete}. Moreover, the work~\cite{cerqueira2024sobolevmetricsspacesdiscrete} studied completeness properties of the space $\R_*^{d\times n}$ equipped with $g^m$ for $m\in\Z_{\geq 0}$ and established the following result, which is part of~\cite[Theorem 4.1]{cerqueira2024sobolevmetricsspacesdiscrete}.
\begin{thm}[Cerqueira--Hartman--Klassen--Bauer~\cite{cerqueira2024sobolevmetricsspacesdiscrete}]\label{thm:geocomplete}
    Suppose $d\geq 2$ and $n\geq 3$. Then the space $\R_*^{d\times n}$ equipped with $g^m$ is geodesically complete if and only if $m\geq 2$.
\end{thm}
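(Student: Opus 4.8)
The plan is to reduce the statement, via the Hopf--Rinow theorem, to controlling how the shape of a discrete regular curve can degenerate along a path of finite $g^m$-length, and to carry out this control using the scale-invariance of $g^m$ together with discrete Poincar\'e-type inequalities on the cycle $\Z/n\Z$. Since $\R_*^{d\times n}$ is a connected finite-dimensional Riemannian manifold, by Hopf--Rinow it is $g^m$-geodesically complete if and only if $(\R_*^{d\times n},d_{g^m})$ is a complete metric space; and since $\R_*^{d\times n}$ is open in $\R^{d\times n}$, a subset has compact closure in $\R_*^{d\times n}$ precisely when it is bounded in the Euclidean norm and its edge lengths are bounded below by a positive constant, on which sets $g^m$ and the Euclidean metric are uniformly comparable. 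As a $d_{g^m}$-Cauchy sequence has bounded diameter and can therefore be joined to a fixed base point by paths of uniformly bounded $g^m$-length, it suffices to prove: (i) for $m\le1$ there is a path of finite $g^m$-length leaving $\R_*^{d\times n}$; (ii) for $m\ge2$ every path $v\colon[0,1]\to\R_*^{d\times n}$ of finite $g^m$-length $L$ has image in a compact subset of $\R_*^{d\times n}$ depending only on $v(0)$, $L$, $n$, $m$.

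For (i), I would fix a generic $v^0\in\R_*^{d\times n}$ and an index $j$, and let $v(t)$, $t\in[0,1)$, keep $v_i(t)=v_i^0$ for $i\ne j$ while translating $v_j$ linearly onto $v_{j+1}$, so that $e_j(v(t))=(1-t)\,e_j(v^0)$ collapses while all other edge lengths and $l(v(t))$ stay bounded away from $0$ and $\infty$. Along this path the velocity $h$ satisfies $h_i=0$ for $i\ne j$, so in $g^m_{v(t)}(h,h)$ only the zeroth-order term and, when $m=1$, the two arc-length differences straddling the collapsing edge are nonzero; a short computation then gives $g^m_{v(t)}(h,h)\le C(v^0)$ for $m=0$ and $g^m_{v(t)}(h,h)\le C(v^0)/(1-t)$ for $m=1$. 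In both cases $\int_0^1\!\sqrt{g^m_{v(t)}(h,h)}\,\dd t<\infty$, whereas $v(t)$ converges in $\R^{d\times n}$ to a configuration with $e_j=0$; thus $(\R_*^{d\times n},d_{g^m})$ is incomplete.

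For (ii), the core claim is that along \emph{any} path $v$ both $t\mapsto\log\max_i|e_i(v(t))|$ and $t\mapsto\log\min_i|e_i(v(t))|$ are $d_{g^m}$-Lipschitz (they are locally Lipschitz, being finite extrema of locally Lipschitz functions, so it is enough to bound their a.e.\ derivatives). For the maximum, using only the zeroth-order term of $g^m$: if $k$ realises the maximum then $\bigl|\tfrac{\db}{\db t}\log|e_k|\bigr|\le\tfrac{|h_k|+|h_{k+1}|}{|e_k|}$ while $g^m_v(h,h)\ge\tfrac{|e_k|}{2\,l(v)^3}\bigl(|h_k|^2+|h_{k+1}|^2\bigr)$, and since $|e_k|\ge l(v)/n$ this yields $\bigl|\tfrac{\db}{\db t}\log\max_i|e_i|\bigr|\le 2n^{3/2}\sqrt{g^m_v(h,h)}$ for every $m\ge0$; hence $\max_i|e_i(v)|$, and thus $l(v)=\sum_i|e_i(v)|$, varies within the fixed factor $e^{2n^{3/2}L}$ along a path of length $L$. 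For the minimum one needs $m\ge2$. When $m=2$, since $|e_i(v)|+|e_{i-1}(v)|\le l(v)$,
\begin{equation*}
    g^2_v(h,h)\ \ge\ l(v)\sum_i|\Db_s^2 h_i|^2\,\mu_i\ =\ \sum_i\frac{2\,l(v)}{|e_i(v)|+|e_{i-1}(v)|}\,\bigl|\Db_s^1 h_i-\Db_s^1 h_{i-1}\bigr|^2\ \ge\ 2\sum_i\bigl|\Db_s^1 h_i-\Db_s^1 h_{i-1}\bigr|^2,
\end{equation*}
and because the curve is closed, $\sum_i|e_i(v)|\,\Db_s^1 h_i=\sum_i(h_{i+1}-h_i)=0$, so $\Db_s^1 h$ has vanishing $|e_i(v)|$-weighted mean on $\Z/n\Z$; a discrete Poincar\'e inequality on the cycle then gives $\max_j|\Db_s^1 h_j|^2\le\tfrac n2\,g^2_v(h,h)$, whence $\bigl|\tfrac{\db}{\db t}\log|e_j|\bigr|\le\tfrac{|h_{j+1}-h_j|}{|e_j|}=|\Db_s^1 h_j|\le\sqrt{n/2}\,\sqrt{g^2_v(h,h)}$ if $j$ realises the minimum. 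For $m\ge3$ I would reach the same conclusion by descending the hierarchy $\Db_s^m,\Db_s^{m-1},\dots,\Db_s^1$: each $\Db_s^k h$ with $k\ge1$ has vanishing edge-weighted mean, giving $\max_j|\Db_s^k h_j|\le n\,l(v)\max_j|\Db_s^{k+1}h_j|$, while the $m$-th order term dominates $l(v)^{2m-4}\sum_i|\Db_s^{m-1}h_i-\Db_s^{m-1}h_{i-1}|^2$; the powers of $l(v)$ cancel, leaving $g^m_v(h,h)\ge c(n,m)\max_j|\Db_s^1 h_j|^2$ with $c(n,m)>0$.

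Granting this, along a path of finite $g^m$-length $L$ every edge length is bounded below and $l(v)$ is bounded above by constants depending only on $v(0)$, $L$, $n$, $m$; the zeroth-order term then bounds each $|h_i|$ by a fixed multiple of $\sqrt{g^m_v(h,h)}$, so each displacement $|v_i(1)-v_i(0)|\le\int_0^1|h_i|\,\dd t$ is bounded, and the image of the path lies in a compact subset of $\R_*^{d\times n}$. Together with (i), this proves the theorem. The main difficulty is the second Lipschitz estimate for general $m\ge2$, namely organizing the descent through the discrete arc-length derivatives so that the powers of $l(v)$ and the constants remain uniform; this interplay of the discrete Poincar\'e inequality on $\Z/n\Z$ with the telescoping identities $\sum_i|e_i(v)|\,\Db_s^{2k+1}h_i=0$ forced by the curve being closed is precisely the mechanism that is absent when $m\le1$, where the edge weights $\mu_i$ in the metric vanish as an edge collapses and thereby destroy $\ell^\infty$-control on $\Db_s^1 h$.
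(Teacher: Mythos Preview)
This theorem is not proved in the present paper: it is quoted from \cite{cerqueira2024sobolevmetricsspacesdiscrete} as part of their Theorem~4.1, and no argument is given here beyond the citation. The only ingredient of that proof which the paper explicitly invokes (in the proof of Lemma~\ref{lem:edge_growth}) is the estimate, attributed to \cite[Proof of Lemma~4.4]{cerqueira2024sobolevmetricsspacesdiscrete}, that for $m\ge2$ the logarithmic derivative of every edge length along a path is controlled by $g^m_v(h,h)$.

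Your sketch is a reasonable reconstruction of exactly that argument. The decisive step---writing $\bigl|\tfrac{\db}{\db t}\log|e_j(v)|\bigr|\le|\Db_s^1 h_j|$ and then bounding $\max_j|\Db_s^1 h_j|$ by $g^m_v(h,h)^{1/2}$ via the telescoping identities that force a vanishing weighted mean of $\Db_s^k h$ on $\Z/n\Z$ together with a discrete Poincar\'e inequality---matches the mechanism behind the cited lemma; your constant depends on $n$, whereas the quoted bound carries the $n$-free constant $2^{1-m}$, so the original descent is evidently organised somewhat more tightly, but this is irrelevant for completeness. Two small imprecisions worth fixing: the weights making $\sum_i w_i\,\Db_s^k h_i=0$ are $|e_i(v)|$ for odd $k$ and $\tfrac12(|e_i(v)|+|e_{i-1}(v)|)$ for even $k$, not uniformly ``edge weights''; and since you already control $\max_i|e_i(v)|$ (hence $l(v)$) along any path of finite $g^m$-length, the powers of $l(v)$ arising in the descent from $\Db_s^{m-1}h$ to $\Db_s^1 h$ need not cancel exactly---they may simply be absorbed into constants depending on $v(0)$, $L$, $n$, $m$.
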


Let $\Delta^{d\times n,m}$ denote the Laplace--Beltrami operator of the Riemannian manifold $(\R_*^{d\times n},g^m)$ for $d\geq 2$, $n\geq 3$ and $m\in\Z_{\geq 0}$. Brownian motion on the space $\R_*^{d\times n}$ of discrete regular curves equipped with $g^m$ is then defined to be the stochastic process on $\R_*^{d\times n}$ with generator $\frac{1}{2}\Delta^{d\times n,m}$. Numerical simulations for Brownian motion on spaces of discrete regular curves are included in Section~\ref{sec:simulations}. The Riemannian manifold $(\R_*^{d\times n},g^m)$ is called stochastically complete if, with probability one, the induced Brownian motion exists for all times. In this work, we derive the following broad characterization.
\begin{thm}\label{thm:stocomplete}
    Suppose $d\geq 2$ and $n\geq 3$. Then the space $\R_*^{d\times n}$ equipped with $g^m$ is stochastically complete for all $m\geq 2$.
\end{thm}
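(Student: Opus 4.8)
The plan is to invoke the classical volume-growth criterion for stochastic completeness due to Grigor'yan, which states that a geodesically complete Riemannian manifold is stochastically complete provided that, for some fixed base point $o$ and all large $r$,
\begin{displaymath}
    \int^\infty \frac{r}{\log \vol\left(B(o,r)\right)} \dd r = \infty,
\end{displaymath}
where $B(o,r)$ denotes the geodesic ball of radius $r$ about $o$. Since Theorem~\ref{thm:geocomplete} guarantees geodesic completeness of $(\R_*^{d\times n},g^m)$ for $m\geq 2$, it suffices to show that the Riemannian volume of geodesic balls grows at most polynomially in $r$, or even subexponentially; then the logarithm in the denominator grows at most like $\log r$ (or slower), and the integral diverges. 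So the whole argument reduces to a volume-growth estimate.

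The key steps I would carry out are as follows. First, I would obtain an explicit-enough lower bound on the Riemannian distance $\dist^{d\times n,m}(o,v)$ in terms of the Euclidean geometry of the configuration $v\in\R_*^{d\times n}$; equivalently, I would control how fast one can move in the Euclidean coordinates given a unit budget of $g^m$-length. The metric $g^m$ has coefficients that are rational functions of the edge vectors $e_i(v)$ and the total length $l(v)$, involving negative powers of $l(v)$ and of the edge lengths $|e_i(v)|$ (through the discrete arc-length derivatives $\Db_s^m$). Second, I would translate this into the statement that the geodesic ball $B(o,r)$ is contained in some explicitly described region $U_r\subseteq\R_*^{d\times n}$ — for instance a set where the coordinates are bounded by a polynomial in $r$ and the edge lengths are bounded below by something like $\exp(-\mathrm{poly}(r))$ or a polynomial reciprocal of $r$ — and then bound the Euclidean volume of $U_r$ and the size of the metric-density factor $\sqrt{\det g^m}$ on $U_r$. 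The Riemannian volume is $\int_{U_r}\sqrt{\det g_v^m}\dd v$ in the global chart $\R^{d\times n}$, so I need an upper bound on $\sqrt{\det g^m_v}$ valid on $U_r$; since $\det g^m_v$ is again a rational function of the $e_i(v)$ and $l(v)$, one bounds it using the bounds on $U_r$. Putting these together should yield $\vol(B(o,r))\leq \exp(C r^\alpha)$ for suitable constants, whence $\log\vol(B(o,r))\leq C r^\alpha$ and Grigor'yan's integral test applies as long as $\alpha\leq 2$; if $\alpha$ comes out larger one needs to be more careful, but I expect the natural bounds to give at most $\alpha=2$ because each unit of $g^m$-length moves Euclidean coordinates by at most $O(\text{polynomial})$, so $\mathrm{diam}$-type quantities grow polynomially in $r$ and the volume grows at most exponentially in a polynomial of $r$.

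The main obstacle I anticipate is the degeneracy of the metric near the boundary of $\R_*^{d\times n}$, i.e. as an edge $e_i(v)\to 0$ or as $l(v)\to 0$ or $\infty$. The factors $l(v)^{-3}$, $l(v)^{2m-3}$ and the inverse edge lengths appearing in $\Db_s^m$ mean that $\det g^m_v$ blows up in some boundary directions and vanishes in others; the delicate point is that geodesic completeness (Theorem~\ref{thm:geocomplete}) already tells us these bad regions are "infinitely far away'' in the metric, but I need a \emph{quantitative} version: a ball of finite radius $r$ stays a definite distance from the boundary, with that distance degrading only polynomially (or at worst exponentially) in $r$. Extracting such a quantitative lower bound on edge lengths inside $B(o,r)$ is essentially a matter of estimating the $g^m$-length of any path from $o$ towards the boundary from below, using the same energy estimates that underlie the proof of Theorem~\ref{thm:geocomplete} in~\cite{cerqueira2024sobolevmetricsspacesdiscrete}; the scaling behaviour of $l(v)$ should be handled similarly, noting that the metric is scale-covariant so that $l(v)$ can be controlled along geodesics. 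Once the region $U_r$ is pinned down with such quantitative bounds, the remaining volume computation is a routine, if tedious, estimate of an integral of an explicit rational density over an explicit region, and Grigor'yan's criterion closes the argument.
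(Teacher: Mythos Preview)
Your proposal is correct and follows essentially the same route as the paper: Grigor'yan's criterion combined with (i) the quantitative edge-length bound $C_0\exp(-r/2^{m-1})\leq |e_i(v)|\leq C_1\exp(r/2^{m-1})$ on $B_{g^m}(v_0,r)$, obtained exactly as you anticipate from the estimate $\left|\partial_t\log|e_i(v(t))|\right|\leq 2^{1-m}\,g^m_v(\dot v,\dot v)$ underlying the proof of Theorem~\ref{thm:geocomplete} in~\cite{cerqueira2024sobolevmetricsspacesdiscrete}, and (ii) the rational structure of $\det\mathbf{G^m}$ in the edge lengths (the paper bounds it via Hadamard's inequality applied to the diagonal entries). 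The resulting volume growth is genuinely exponential rather than polynomial, $V(r)\leq (2r)^{dn}D_0\exp(D_1 r)$, so your $\alpha$ comes out equal to $1$, comfortably within Grigor'yan's test.
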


We prove Theorem~\ref{thm:stocomplete} by showing that whenever $\R_*^{d\times n}$ equipped with $g^m$ is geodesically complete then it is also stochastically complete. This is not necessarily guaranteed in general, and the recent work~\cite{HPS} constructs a landmark configuration space that is geodesically complete but stochastically incomplete by using a kernel not arising from a Sobolev operator. We also highlight that the geodesic incompleteness of the space $\R_*^{d\times n}$ equipped with $g^m$ for $m\in\{0,1\}$ need not imply its stochastic incompleteness, with the punctured Euclidean plane being a prominent example of a space that is geodesically incomplete yet stochastically complete.

Although the notions of geodesic completeness and stochastic completeness are not linked straightforwardly, under additional control of the curvature or the volume growth of geodesic balls, it is known that geodesic completeness implies stochastic completeness. Specifically, we make use of the following characterization by Grigor'yan, see~\cite[Theorem~1]{grigoryan1} and \cite[Theorem~9.1]{grigoryan2}.
\begin{thm}[Grigor'yan~\cite{grigoryan1,grigoryan2}]\label{thm:grigoryan}
    Let $(M,g)$ be a finite-dimensional Riemannian manifold that is geodesically complete. If, for some $x\in M$, the volumes $V(r)$ of the geodesic balls with center $x$ and of radius $r>0$ satisfy, for some $a>0$,
    \begin{displaymath}
        \int_a^\infty\frac{r\dd r}{\log V(r)} =\infty,
    \end{displaymath}
    then $(M,g)$ is stochastically complete.
\end{thm}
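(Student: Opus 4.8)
\medskip

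The plan is to verify the volume-growth hypothesis of Theorem~\ref{thm:grigoryan} for $(\Rdnst, g^m)$ with $m \geq 2$. Geodesic completeness is already supplied by Theorem~\ref{thm:geocomplete}, so I would fix a basepoint $x$ (say a regular unit-length $n$-gon) and write $V(r) = \vol(B(x,r))$ for the Riemannian volume of the geodesic ball. It then suffices to produce a bound of the form $\log V(r) = O(r^2)$ as $r \to \infty$, since this already yields $\int_a^\infty r\,(\log V(r))^{-1}\dd r = \infty$. I in fact expect the genuine growth to be only simple-exponential (for $m=2$) or polynomial (for $m > 2$), but the weaker quadratic-exponential target is the most robust against the technical estimates below and is all Grigor'yan requires.

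I would work in edge coordinates, writing a discrete curve as a base vertex $v_0 \in \R^d$ together with its edges $e_0(v), \dots, e_{n-1}(v) \in \R^d \setminus \{0\}$ subject to the closedness relation $\sum_i e_i(v) = 0$; in these coordinates $\Rdnst$ is a product of the position $\R^d$ with the open set of admissible edge tuples, the scale action is $e_i \mapsto \lambda e_i$, and the non-immersed boundary is exactly $\{\min_i |e_i(v)| = 0\}$. A direct computation using $e_i(\lambda v) = \lambda e_i(v)$, $l(\lambda v) = \lambda l(v)$ and $\Db_s^m(\lambda h)|_{\lambda v} = \lambda^{1-m}\Db_s^m h|_v$ shows that $g^m$ is scale invariant, $g^m_{\lambda v}(\lambda h, \lambda k) = g^m_v(h,k)$, so the logarithmic scale behaves like a radial coordinate and contributes only logarithmically to distances. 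The first main step is to turn this into quantitative confinement of $B(x,r)$. I would exhibit three families of functions that are Lipschitz with respect to $\dist_{g^m}$, with constants controlled on the region of interest: the log-scale $\log l(v)$, the base vertex $v_0$, and, for each edge, the collapse potential $\psi_i(v) = \int_{|e_i(v)|}^{1} s^{-m/2}\dd s$. Each Lipschitz estimate reduces, via Cauchy--Schwarz applied to the differentials of $l$, $v_0$ and $\psi_i$, to a discrete Poincar\'e-type coercivity bound for $g^m$; the structural input here is that on the cyclic index set the kernel of $\Db_s^m$ consists only of the constant (infinitesimal translation) vectors, which are in turn controlled by the zeroth-order term of $g^m$. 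Together these bounds confine $B(x,r)$ to an explicit region $\Omega_r \subset \Rdnst$ in which $l(v) \in [l(x)e^{-Cr}, l(x)e^{Cr}]$, the base vertex lies within Euclidean distance $Cr\,e^{Cr}$ of $x$, and every edge satisfies $|e_i(v)| \geq \epsilon(r) > 0$.

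With $B(x,r) \subset \Omega_r$, I would bound $V(r) \leq \int_{\Omega_r} \sqrt{\det g^m_v}\dd v$ and estimate the integrand from the block structure of $g^m$ in edge coordinates. Away from the boundary the volume density is a controlled product of the zeroth-order and $m$th-order contributions, and the only blow-up comes from the factors $|e_i(v)|^{-m}$ carried by $\Db_s^m$ as an edge degenerates, so $\sqrt{\det g^m_v} \lesssim \prod_i |e_i(v)|^{-s}$ for a finite exponent $s = s(d,m)$. Integrating over $\Omega_r$, the scale and base-vertex directions each contribute only a polynomial factor in $r$: by scale invariance the dilation direction has linear length, and although the base vertex ranges over a Euclidean ball of radius $\sim e^{Cr}$, its volume density carries a compensating $l(v)^{-d}$ that reduces its Riemannian contribution to order $r^d$. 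Each near-degenerate edge then contributes $\int_{\epsilon(r)} u^{-s}\dd u$, a power of $\epsilon(r)$, so the whole estimate reduces to how fast $\epsilon(r)$ is allowed to decay.

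The crux, and the step I expect to be hardest, is precisely the behaviour near non-immersed configurations. The density genuinely diverges as an edge collapses, so the bound succeeds only because geodesic completeness for $m \geq 2$ keeps $B(x,r)$ a definite distance from the boundary: driving an edge length down to $\epsilon$ costs Riemannian distance of order $\int_\epsilon^{1} u^{-m/2}\dd u$, whose exponent $m/2$ is at least $1$ exactly when $m \geq 2$. Thus for $m = 2$ one gets $\epsilon(r) \gtrsim e^{-Cr}$ and for $m > 2$ one gets $\epsilon(r) \gtrsim r^{-C}$, and in either case the edge-collapse contribution keeps $\log V(r) = O(r)$, comfortably within the required $O(r^2)$. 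It is this threshold that ties the argument to $m \geq 2$ rather than $m \geq 1$, mirroring the role of the order in Theorem~\ref{thm:geocomplete}. The remaining delicacy is to make the Lipschitz and coercivity constants in the confinement step uniform enough over $\Omega_r$ — in particular to prevent the Poincar\'e constant from degenerating as edges shorten — which I would handle by a bootstrap on the sublevel sets of the control functions $\log l$, $v_0$ and $\psi_i$.
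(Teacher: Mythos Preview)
Your proposal does not address the stated theorem at all. Theorem~\ref{thm:grigoryan} is Grigor'yan's volume-growth criterion for stochastic completeness; the paper does not prove it but cites it from~\cite{grigoryan1,grigoryan2}. What you have written is a plan to \emph{apply} Theorem~\ref{thm:grigoryan} to $(\Rdnst,g^m)$ for $m\geq 2$, i.e.\ a proof sketch for Theorem~\ref{thm:stocomplete}, not for Theorem~\ref{thm:grigoryan}.

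If your intended target was indeed Theorem~\ref{thm:stocomplete}, then you and the paper pursue the same overall strategy---control the volume of geodesic balls and invoke Grigor'yan---but the paper's implementation is considerably shorter than yours. Where you propose edge coordinates, three families of Lipschitz control functions, discrete Poincar\'e-type coercivity, and a bootstrap to keep constants uniform, the paper gets by with two direct ingredients. First, the edge-length confinement on $B_{g^m}(v_0,r)$ is obtained in one stroke from the pointwise inequality $\bigl|\tfrac{\db}{\db t}\log|e_i(v(t))|\bigr|\leq 2^{1-m}\,g^m_v(h,h)$, already established in~\cite{cerqueira2024sobolevmetricsspacesdiscrete}; integrating along geodesics gives $C_0 e^{-r/2^{m-1}}\leq |e_i(v)|\leq C_1 e^{r/2^{m-1}}$ with no Poincar\'e argument or bootstrap required. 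Second, the density bound comes from Hadamard's inequality $\det\mathbf{G^m}\leq\prod_k\mathbf{G}^\mathbf{m}_{kk}$ together with the observation that each diagonal entry is a rational function of the edge lengths whose denominator only carries factors $|e_i|$, $|e_i|+|e_{i-1}|$ and $\sum_i|e_i|$; plugging in the two-sided edge bounds immediately yields $\sqrt{\det\mathbf{G^m}}\leq D_0 e^{D_1 r}$ on $B_{g^m}(v_0,r)$. No separate treatment of the base vertex or of the scale direction is needed. Your more elaborate route would presumably also arrive, but the coercivity/bootstrap step you flag as ``the remaining delicacy'' is precisely the place where the paper's argument avoids doing any work. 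Note also that the paper obtains exponential lower bounds on edge lengths for \emph{all} $m\geq 2$, whereas your heuristic $\epsilon(r)\gtrsim r^{-C}$ for $m>2$ underestimates the confinement.
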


In addition to establishing Theorem~\ref{thm:stocomplete}, we perform numerical simulations of Brownian motion on the Riemannian manifold $(\mathbb{R}_*^{d\times n}, g^m)$ using the Euler--Maruyama discretization of the associated stochastic differential equation. These simulations allow us to visualize stochastic evolution in the space of parameterized curves and to compare the qualitative behavior induced by Sobolev-type metrics of different orders.  

What remains to be studied in future work is the stochastic completeness property of $\R_*^{d\times n}$ equipped with $g^m$ for $m\in\{0,1\}$. To provide some indication that these spaces might well remain stochastically complete even for $m\in\{0,1\}$, we include a heuristic discussion for the space of triangles modulo rotation, translation and scaling. For this toy model, we can gain insight into the geometry near singularities and, thereby, into both geodesic and stochastic completeness properties of the space.

\paragraph{The paper is organized as follows.}
In Section~\ref{sec:stocomplete}, we first derive a control for the volume growth of geodesic balls in the space of discrete regular curves equipped with discrete Sobolev-type metrics of
order two or higher and then deduce Theorem~\ref{thm:stocomplete}, which guarantees long-time existence of the associated Brownian motion. Simulations for sample paths of Brownian motion on spaces of discrete regular curves are included in Section~\ref{sec:simulations}. We close with a heuristic analysis in Section~\ref{sec:triangles} of geodesic completeness and stochastic completeness for the space of triangles in the plane modulo rotation, translation and scaling.

\paragraph{Acknowledgements.}
The authors would like to thank the organizers and participants of the Summer Shape Workshops (\href{https://sites.google.com/view/shape-workshop/home}{https://sites.google.com/view/shape-workshop/home}) ``Math in Maine'' in 2024 and ``Math in Umbria'' in 2025. The authors would also like to thank the organizers of the thematic program ``Infinite-dimensional Geometry: Theory and Applications'' at the Erwin Schrödinger International Institute for Mathematics and Physics in Vienna and acknowledge the excellent working conditions at the institute. Most of this work was done at these events.

\section{Stochastic completeness}\label{sec:stocomplete}

We derive a control for the volume growth of geodesic balls in $\R_*^{d\times n}$ equipped with $g^m$ for $m\geq 2$, which then together with the criterion by Grigor'yan given in Theorem~\ref{thm:grigoryan} allows us to deduce our stochastic completeness result stated in Theorem~\ref{thm:stocomplete}.

\begin{lemma}\label{lem:rational_functions}
    Let $h\in\R^{d\times n}$ be arbitrary. Then, for all $v\in\R^{d\times n}_*$ and $m\in\Z_{\geq 0}$, we have that $g^m_v(h,h)$ is a rational function of the edge lengths $|e_i(v)|$ for $i\in \Z/n\Z$ and the denominator of its reduced form only has factors of the form $|e_i(v)|$, $|e_i(v)|+|e_{i-1}(v)|$ and $\sum_{i=0}^{n-1}|e_i(v)|$.
\end{lemma}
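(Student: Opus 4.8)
The plan is to prove this by induction on $m$, first controlling the discrete arc length derivatives $\Db_s^m h$ and then assembling $g^m_v(h,h)$ from its defining formula. I would treat the edge lengths $|e_0(v)|,\dots,|e_{n-1}(v)|$ as indeterminates $x_0,\dots,x_{n-1}$ and work in the polynomial ring $\R[x_0,\dots,x_{n-1}]$, which is a unique factorization domain, and in its field of fractions. Call a rational function \emph{admissible} if its reduced denominator is, up to a nonzero scalar, a product of the degree-one polynomials $x_i$, $x_i+x_{i-1}$ and $\sum_{j=0}^{n-1}x_j$ for $i\in\Z/n\Z$. Since each of these linear forms is irreducible, and for $n\geq 3$ they are pairwise non-proportional, it suffices to show that $g^m_v(h,h)$, viewed through the $x_i$, is admissible.

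The first step is the elementary closure of the admissible class under the ring operations: if $f=a/b$ and $g=c/q$ are in lowest terms, then $fg$ and $f\pm g$ admit representations with denominator $bq$, so their reduced denominators divide $bq$, and by unique factorization any divisor of $bq$ is again a product of the allowed linear forms whenever $b$ and $q$ are. In particular all constants, each of $x_i$, $x_i+x_{i-1}$, $\sum_j x_j$, and their reciprocals are admissible.

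The second step is an induction on $m$ showing that, for each $i\in\Z/n\Z$, every one of the $d$ coordinate functions of $\Db_s^m h_i$ is admissible. For $m=0$ this is clear since $\Db_s^0 h_i=h_i$ has constant coordinates (here $h$ is fixed). For the inductive step, the recursion expresses each coordinate of $\Db_s^m h_i$ as a difference of coordinates of $\Db_s^{m-1}h$ — admissible by hypothesis — divided by $|e_i(v)|$ or by $\tfrac12(|e_i(v)|+|e_{i-1}(v)|)$, i.e.\ by an admissible factor; admissibility is preserved by the first step. Consequently $\langle\Db_s^m h_i,\Db_s^m h_i\rangle=\sum_{a=1}^d\bigl((\Db_s^m h_i)_a\bigr)^2$ is admissible for every $i$.

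The final step reads the claim off the definition of $g^m_v(h,h)$: each summand indexed by $i$ is a product of a constant ($\langle h_i,h_i\rangle$ or $\langle\Db_s^m h_i,\Db_s^m h_i\rangle$, admissible by the previous step), a polynomial in the $x_i$ ($\tfrac12(|e_i(v)|+|e_{i-1}(v)|)$ or $\mu_i$), and a power of $l(v)=\sum_j|e_j(v)|$, which sits in the numerator as $l(v)^{2m-3}$ when $m\geq 2$ and in the denominator as $l(v)^{-3}$ or $l(v)^{-1}$ when $m\in\{0,1\}$ — in every case an admissible factor. Hence each summand is admissible, and so is the sum over $i$, which is exactly the assertion of the lemma. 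The only point requiring any care is the closure of the admissible class under addition, which rests on the unique factorization property of $\R[x_0,\dots,x_{n-1}]$; the remainder is bookkeeping through the recursion defining $\Db_s^m$.
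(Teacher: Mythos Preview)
Your proof is correct and follows essentially the same approach as the paper: an induction on $m$ showing that each coordinate of $\Db_s^m h_i$ has denominator built only from the allowed linear forms, followed by reading off the result from the definition of $g^m_v(h,h)$. Your presentation is slightly more explicit about the algebraic closure properties (working in the UFD $\R[x_0,\dots,x_{n-1}]$ and defining an ``admissible'' class closed under sums, products, and division by the distinguished linear forms), whereas the paper leaves this routine verification implicit.
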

\begin{proof}
    We start by observing that any integer power of $l(v)=\sum_{i=0}^{n-1}|e_i(v)|$ is a rational function of the edge lengths $|e_i(v)|$ for $i\in \Z/n\Z$ with a denominator of the claimed form.
    In particular, it follows that
    \begin{displaymath}
        \sum_{i=0}^{n-1}\frac{\langle h_i,h_i\rangle}{l(v)^3} \cdot \frac{|e_i(v)|+|e_{i-1}(v)|}{2}
    \end{displaymath}
    is also a rational function of the edge lengths $|e_i(v)|$ for $i\in \Z/n\Z$ with a denominator satisfying the claimed property.
    Moreover, for the higher-order term, that is,
    \begin{displaymath}
        \sum_{i=0}^{n-1}\frac{\langle \Db_s^m h_i,\Db_s^m h_i\rangle}{l(v)^{3-2m}}\cdot\mu_i,
    \end{displaymath}
    it then suffices to establish that $\langle \Db_s^m h_i,\Db_s^m h_i\rangle$ for $i\in \Z/n\Z$ is likewise a rational function of the edge lengths with a denominator of the desired form. Formally, this can be shown via induction by first using that $\Db_s^0 h = h$ is a constant and then iterating through, for all $i\in \Z/n\Z$,
    \begin{displaymath}
        \Db_s^m h_i = \begin{cases} \dfrac{\Db_s^{m-1}h_{i+1} - \Db_s^{m-1}h_{i}}{|e_i(v)|}& \text{if } m \text{ is odd,}\\[1em] \dfrac{\Db_s^{m-1}h_{i} - \Db_s^{m-1}h_{i-1}}{\frac{1}{2}\left(|e_i(v)|+|e_{i-1}(v)|\right)} & \text{if } m \text{ is even,}\\ \end{cases}
    \end{displaymath}
    which componentwise takes the difference of rational functions whose denominators in reduced form only have factors of the form $|e_i(v)|$ as well as $|e_i(v)|+|e_{i-1}(v)|$ for $i\in \Z/n\Z$, and then divides by $|e_i(v)|$ or $|e_i(v)|+|e_{i-1}(v)|$, respectively, thereby retaining the desired property.
\end{proof}

Let $\vol_{g^m}$ denote the volume form on $\R^{d\times n}_*$ with respect to the Riemannian metric $g^m$. If $\mathbf{G^m}$ is the metric tensor of $g^m$ written in the standard Euclidean basis for $\R^{d\times n}$ and $\db V$ is the Euclidean volume form on $\R^{d\times n}$ then
\begin{displaymath}
    \vol_{g^m}=\sqrt{\det\mathbf{G^m}}\dd V.
\end{displaymath}
In particular, the following proposition, which is a consequence of Lemma~\ref{lem:rational_functions}, provides a control on the volume form $\vol_{g^m}$.

\begin{lemma}\label{lem:volume_control_via_edges}
    For all $m\in\Z_{\geq 0}$, the function $\det\mathbf{G^m}(v)$ is bounded above by a rational function of the edge lengths $|e_i(v)|$ for $i\in \Z/n\Z$ whose denominator in reduced form only has factors of the form $|e_i(v)|$, $|e_i(v)|+|e_{i-1}(v)|$ and $\sum_{i=0}^{n-1}|e_i(v)|$.
\end{lemma}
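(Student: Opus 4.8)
The plan is to lift the control from the quadratic form provided by Lemma~\ref{lem:rational_functions} to the full metric tensor by polarization and then to propagate it through the determinant. First I would note that each entry of $\mathbf{G^m}(v)$ in the standard Euclidean basis $\varepsilon_1,\dots,\varepsilon_{dn}$ of $\R^{d\times n}$ is $g^m_v(\varepsilon_a,\varepsilon_b)$, and the polarization identity gives $g^m_v(\varepsilon_a,\varepsilon_b)=\frac12\big(g^m_v(\varepsilon_a+\varepsilon_b,\varepsilon_a+\varepsilon_b)-g^m_v(\varepsilon_a,\varepsilon_a)-g^m_v(\varepsilon_b,\varepsilon_b)\big)$. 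Applying Lemma~\ref{lem:rational_functions} with $h$ equal to $\varepsilon_a+\varepsilon_b$, $\varepsilon_a$ and $\varepsilon_b$ respectively, every entry of $\mathbf{G^m}(v)$ is a rational function of the edge lengths $|e_i(v)|$ for $i\in\Z/n\Z$ whose denominator in reduced form only has factors of the three prescribed types.

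Next I would expand $\det\mathbf{G^m}$ by the Leibniz formula, obtaining a polynomial with integer coefficients in the $(dn)^2$ matrix entries. Substituting the rational expressions from the first step and clearing to a common denominator shows that $\det\mathbf{G^m}(v)$ is a rational function of the edge lengths whose unreduced denominator is a finite product of factors of the form $|e_i(v)|$, $|e_i(v)|+|e_{i-1}(v)|$ and $\sum_{j=0}^{n-1}|e_j(v)|$. To obtain the claim for the reduced form, I would regard $\det\mathbf{G^m}$ as an element of the field of rational functions over the polynomial ring $\R[\ell_0,\dots,\ell_{n-1}]$ in formal variables $\ell_i$ representing the edge lengths; this ring is a unique factorization domain, and each of $\ell_i$, $\ell_i+\ell_{i-1}$ and $\sum_j\ell_j$ is a nonconstant linear form, hence irreducible. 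Consequently, when $\det\mathbf{G^m}$ is brought to lowest terms, its denominator divides the unreduced one and so its irreducible factors remain among these, which is exactly the asserted description. Since $g^m$ is a Riemannian metric, $\det\mathbf{G^m}(v)$ is in fact \emph{equal} to such a rational function, so the stated bound is immediate; if one instead prefers a genuine majorant with an explicit polynomial numerator, one may replace every coefficient of the numerator by its absolute value, using that all edge lengths and their relevant sums are positive.

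I do not anticipate a real obstacle: beyond invoking Lemma~\ref{lem:rational_functions}, the argument is bookkeeping. The one place that needs a word of care is the reduced-form assertion, where one must rule out that cancellation inside $\det\mathbf{G^m}$ could create a denominator with a factor outside the three listed families; this is precisely what the unique factorization argument prevents, relying only on the irreducibility of nonzero linear polynomials in $\R[\ell_0,\dots,\ell_{n-1}]$. A minor, purely cosmetic, point is to track which power of $l(v)=\sum_j|e_j(v)|$ enters the numerator versus the denominator depending on the parity of $m$ and on whether $m\ge 2$, but this does not affect the validity of the statement.
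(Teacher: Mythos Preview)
Your argument is correct but follows a different path from the paper. The paper bypasses the off-diagonal entries entirely by invoking Hadamard's inequality for positive definite matrices,
\[
\det\mathbf{G^m}(v)\le\prod_{k=1}^{dn}\mathbf{G}^{\mathbf m}_{kk}(v),
\]
and then applies Lemma~\ref{lem:rational_functions} directly to each diagonal entry $\mathbf{G}^{\mathbf m}_{kk}(v)=g^m_v(\varepsilon_k,\varepsilon_k)$. Your route---polarization to get all entries, then the Leibniz expansion, then the UFD argument for the reduced denominator---proves the stronger fact that $\det\mathbf{G^m}(v)$ \emph{equals} such a rational function, not merely that it is bounded by one. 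That extra strength is not needed downstream (Proposition~\ref{propn:growthcontrol} only uses an upper bound), and the Hadamard shortcut saves you from touching polarization or Leibniz at all. On the other hand, your careful remark about irreducibility of the linear forms in $\R[\ell_0,\dots,\ell_{n-1}]$ is the right way to justify the ``reduced form'' clause, and it applies equally to the product $\prod_k\mathbf{G}^{\mathbf m}_{kk}$ in the paper's version, where this step is left implicit.
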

\begin{proof}
    Applying Hadamard's inequality for positive definite matrices to $\mathbf{G^m}=(\mathbf{G}_{k\ell}^\mathbf{m})_{1\leq k,\ell\leq dn}$, we obtain
    \begin{displaymath}
        \det\mathbf{G^m}(v)\leq \prod_{k=1}^{dn} \mathbf{G}_{kk}^\mathbf{m}(v).
    \end{displaymath}
    Since each matrix entry $\mathbf{G}_{kk}^\mathbf{m}(v)$ for $k\in\{1,\dots,dn\}$ can be expressed as $g^m_v(h,h)$, by letting $h$ run over the standard Euclidean basis for $\R^{d\times n}$, the claimed result then follows from Lemma \ref{lem:rational_functions}.
\end{proof}

In order to use Lemma~\ref{lem:volume_control_via_edges}, which gives a control of the volume form $\vol_{g^m}$ on $\R^{d\times n}_*$ in terms of the associated edge lengths, to analyze the volume growth of geodesic balls in the space $\R^{d\times n}_*$ equipped with $g^m$, we need to know that the edge lengths cannot increase or decrease too rapidly in growing geodesic balls. We let $B_{g^m}(v_0,r)$ denote the geodesic ball in $\R^{d\times n}_*$, with respect to the metric $g^m$, with center $v_0\in \R^{d\times n}_*$ and of radius $r>0$.

\begin{lemma}\label{lem:edge_growth}
    Let $m\geq 2$ and fix $v_0\in \R^{d\times n}_*$. There then exist constants $C_0,C_1>0$ such that, for all $r>0$, all $v\in B_{g^m}(v_0,r)$ and all $i\in \Z/n\Z$,
    \begin{displaymath}
        C_0\exp\left(-\frac{r}{2^{m-1}}\right)
        \leq |e_i(v)|
        \leq C_1\exp\left(\frac{r}{2^{m-1}}\right).
    \end{displaymath}
\end{lemma}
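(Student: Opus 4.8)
The plan is to control, uniformly over all paths issuing from $v_0$, the logarithmic rate of change of each edge length, and then to integrate. The first step is a reduction to a pointwise inequality. Let $\pathofcurves\colon[0,1]\to\R^{d\times n}_*$ be a piecewise-$C^1$ path with $\pathofcurves(0)=v_0$ and velocity $h=\pathofcurves'(t)$. Since $e_i(\pathofcurves(t))=\pathofcurves_{i+1}(t)-\pathofcurves_i(t)$ we have $\tfrac{\db}{\db t}e_i(\pathofcurves(t))=h_{i+1}-h_i=|e_i(\pathofcurves(t))|\,\Db_s^1 h_i$, where $\Db_s^1 h$ is formed with the edge lengths of $\pathofcurves(t)$, and therefore
\[
    \left|\frac{\db}{\db t}\log|e_i(\pathofcurves(t))|\right|=\left|\left\langle\frac{e_i(\pathofcurves(t))}{|e_i(\pathofcurves(t))|},\Db_s^1 h_i\right\rangle\right|\leq\bigl|\Db_s^1 h_i\bigr| .
\]
So it is enough to establish the scale-invariant pointwise bound $\bigl|\Db_s^1 h_i\bigr|\leq 2^{-(m-1)}\sqrt{g^m_v(h,h)}$ for every $v\in\R^{d\times n}_*$, every $h\in\R^{d\times n}$ and every $i\in\Z/n\Z$: integrating it along a path of $g^m$-length $<r$ and taking the infimum over such paths gives $\bigl|\log|e_i(v)|-\log|e_i(v_0)|\bigr|\leq r/2^{m-1}$ for all $v\in B_{g^m}(v_0,r)$, and the lemma follows with $C_0=\min_i|e_i(v_0)|>0$ and $C_1=\max_i|e_i(v_0)|>0$.

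For the pointwise bound I would run a discrete ``integration'' that climbs down from order $m$ to order $1$. Write $a^{(j)}=\Db_s^j h$ and let $\omega^{(j)}_i$ be the positive denominator weight of the $j$-th difference quotient, that is $\omega^{(j)}_i=|e_i(v)|$ when $j$ is odd and $\omega^{(j)}_i=\tfrac12(|e_i(v)|+|e_{i-1}(v)|)$ when $j$ is even; in either parity $\sum_i\omega^{(j)}_i=l(v)$ and $\mu_i=\omega^{(m)}_i$. Summing the defining recursion cyclically, the consecutive differences of $a^{(j-1)}$ telescope, which yields the mean-zero identity $\sum_i\omega^{(j)}_i a^{(j)}_i=0$ for every $j\geq1$ and expresses each difference $a^{(j-1)}_k-a^{(j-1)}_i$ as a signed sum $\sum_{\ell\in A}\pm\,\omega^{(j)}_\ell a^{(j)}_\ell$ over one of the two complementary cyclic arcs $A$ joining $i$ to $k$. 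Writing $a^{(j)}_k=\tfrac{1}{l(v)}\sum_i\omega^{(j)}_i\bigl(a^{(j)}_k-a^{(j)}_i\bigr)$ by the mean-zero identity, expanding $a^{(j)}_k-a^{(j)}_i$ via the arc representation in terms of $a^{(j+1)}$, and choosing for each index pair the arc carrying the smaller part of $\sum_\ell\omega^{(j+1)}_\ell|a^{(j+1)}_\ell|$, I obtain the one-step bound
\[
    \max_k\bigl|a^{(j)}_k\bigr|\leq\tfrac12\sum_\ell\omega^{(j+1)}_\ell\bigl|a^{(j+1)}_\ell\bigr|\leq\tfrac{l(v)}{2}\max_\ell\bigl|a^{(j+1)}_\ell\bigr|\qquad(1\leq j\leq m-2).
\]

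For the terminal step $j=m-1$ — the only place where $m\geq2$ enters — I keep the first of these two inequalities and instead apply the Cauchy--Schwarz inequality $\sum_\ell\omega^{(m)}_\ell|a^{(m)}_\ell|\leq\bigl(\sum_\ell\omega^{(m)}_\ell\bigr)^{1/2}\bigl(\sum_\ell\omega^{(m)}_\ell|a^{(m)}_\ell|^2\bigr)^{1/2}$; since the higher-order part of $g^m_v(h,h)$ equals $l(v)^{2m-3}\sum_\ell\omega^{(m)}_\ell|a^{(m)}_\ell|^2$ and the zeroth-order part is nonnegative, we get $\sum_\ell\omega^{(m)}_\ell|a^{(m)}_\ell|^2\leq l(v)^{3-2m}g^m_v(h,h)$ and hence $\max_k|a^{(m-1)}_k|\leq\tfrac12\,l(v)^{2-m}\sqrt{g^m_v(h,h)}$. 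Iterating the one-step bound from $j=1$ to $j=m-1$ and inserting this, the $m-2$ factors $l(v)/2$ cancel against $l(v)^{2-m}$ and one arrives at exactly $\max_i\bigl|\Db_s^1 h_i\bigr|\leq2^{-(m-1)}\sqrt{g^m_v(h,h)}$, as required (for $m=2$ the iteration is empty and the terminal bound already gives the constant $\tfrac12$).

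The step I expect to be the main obstacle, and the one carrying all the content, is the cancellation of the powers of the total length $l(v)$. The metric $g^m$ controls $\Db_s^m h$ only after weighting the $i$-th entry by $\mu_i$ — which can be arbitrarily small — and dividing by $l(v)^{3-2m}$, while $l(v)$ is unbounded on $\R^{d\times n}_*$, so any careless pointwise estimate for $\Db_s^1 h_i$ accumulates uncontrolled factors of $l(v)$ and $1/\mu_i$. What rescues it is that each downward step contributes, through the telescoping sum and the identity $\sum_i\omega^{(j)}_i=l(v)$, exactly one factor $l(v)$, precisely matching the factor $l(v)^{2-m}$ squeezed out of the higher-order term in the terminal step; this is why the statement requires $m\geq2$ (so that at least one such step is available) and why the rate is $2^{-(m-1)}$, one factor $\tfrac12$ per step coming from always selecting the lighter of the two arcs. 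The remaining work is bookkeeping: checking the telescoping relations, the mean-zero identities and the arc decompositions in both parities of the differencing order, and noting that all weights $\omega^{(j)}_i$ are strictly positive on $\R^{d\times n}_*$ so that the divisions and the logarithm are legitimate. Note that geodesic completeness of $(\R^{d\times n}_*,g^m)$ is not needed for this lemma, only the definition of $B_{g^m}(v_0,r)$ through path length.
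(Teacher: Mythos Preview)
Your argument is correct and mirrors the paper's: both reduce to the pointwise bound $\bigl|\tfrac{\db}{\db t}\log|e_i|\bigr|\leq 2^{-(m-1)}\sqrt{g^m_v(h,h)}$, integrate along paths of length $<r$, and set $C_0=\min_i|e_i(v_0)|$, $C_1=\max_i|e_i(v_0)|$. The only difference is that the paper does not prove the pointwise bound at all but imports it from \cite[Proof of Lemma~4.4]{cerqueira2024sobolevmetricsspacesdiscrete}; your telescoping\,/\,mean-zero\,/\,lighter-arc derivation is a clean self-contained substitute for that citation and lands on the same constant $2^{-(m-1)}$.
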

\begin{proof}
    For $v_0\in \R^{d\times n}_*$ fixed, we define the positive constants
    \begin{displaymath}
        C_0=\min_{i\in \Z/n\Z}|e_i(v_0)|
        \quad\text{and}\quad
        C_1 = \max_{i\in \Z/n\Z}|e_i(v_0)|.
    \end{displaymath}
    In~\cite[Proof of Lemma~4.4]{cerqueira2024sobolevmetricsspacesdiscrete}, it was derived that, for $m\geq 2$ and $v\colon [0,T)\to M$ with $\left.\frac{\db}{\db t}v(t)\right|_{t=0}=h$,
    \begin{equation}\label{eq:keyestimate}
        \left|\left.\frac{\db}{\db t}\log|e_i(v(t))|\right|_{t=0}\right|
        \leq\frac{1}{2^{m-1}}g_v^m(h,h).
    \end{equation}
    By integrating~\eqref{eq:keyestimate} along suitable geodesics, it follows that, for all $v\in B_{g^m}(v_0,r)$ and all $i\in \Z/n\Z$,
    \begin{displaymath}
        \left|\log\left(\frac{|e_i(v)|}{|e_i(v_0)|}\right)\right|
        =\left|\log|e_i(v)|-\log|e_i(v_0)|\right|
        \leq \frac{r}{2^{m-1}}.
    \end{displaymath}
    This in turn implies that
    \begin{displaymath}
        C_0\exp\left(-\frac{r}{2^{m-1}}\right)
        \leq |e_i(v_0)|\exp\left(-\frac{r}{2^{m-1}}\right)
        \leq |e_i(v)|
        \leq |e_i(v_0)|\exp\left(\frac{r}{2^{m-1}}\right)
        \leq C_1\exp\left(\frac{r}{2^{m-1}}\right),
    \end{displaymath}
    which establishes the claimed result.
\end{proof}

Combining Lemma~\ref{lem:volume_control_via_edges} and Lemma~\ref{lem:edge_growth}, we obtain the following control for the volume $V_{g^m}(v_0,r)$ of the geodesic ball $B_{g^m}(v_0,r)$ provided $m\geq 2$. Note that the assumption $m\geq 2$ is a consequence of the derivation provided for~\eqref{eq:keyestimate} in~\cite{cerqueira2024sobolevmetricsspacesdiscrete} requiring $m\geq 2$. The volume control we get is of the form one observes on Riemannian manifolds which admit a lower bound on the Ricci curvature by a negative constant.
\begin{proposition}\label{propn:growthcontrol}
    Fix $m\geq 2$ and $v_0\in \R^{d\times n}_*$. There then exist constants $D_0,D_1>0$ such that, for all $r>0$,
    \begin{displaymath}
        V_{g^m}(v_0,r)\leq \left(2r\right)^{dn}D_0\exp\left(D_1 r\right).
    \end{displaymath}
\end{proposition}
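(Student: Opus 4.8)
The plan is to bound the volume of the geodesic ball by integrating the volume form over a region in $\R^{d\times n}$ that is guaranteed to contain the ball, using the edge-length pinching from Lemma~\ref{lem:edge_growth} to control both the size of that region and the integrand.

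First, fix $v_0$ and $r>0$, and let $C_0,C_1>0$ be the constants supplied by Lemma~\ref{lem:edge_growth}. By that lemma, every $v\in B_{g^m}(v_0,r)$ has all its edge lengths $|e_i(v)|$ lying in the interval $[C_0 e^{-r/2^{m-1}}, C_1 e^{r/2^{m-1}}]$. Writing $v=(v_0^{(v)}, e_0(v), e_1(v),\dots,e_{n-2}(v))$ — that is, coordinatizing $\R^{d\times n}$ by the position of one vertex together with the first $n-1$ edge vectors, which is a linear change of coordinates with unit Jacobian — the ball is contained in the product of a Euclidean ball for the base vertex (of radius at most $r$ in the $g^m$-distance, hence of $g^m$-diameter controlled, and in Euclidean terms bounded once we note the position coordinate contributes comparably — more simply, since moving the base vertex by Euclidean distance $\delta$ costs at least a fixed multiple of $\delta/l(v)$ in $g^m$-length and $l(v)$ is itself pinched, the base vertex stays within Euclidean radius $O(r e^{r/2^{m-1}})$) and $n-1$ copies of a Euclidean ball of radius $C_1 e^{r/2^{m-1}}$ for each edge vector. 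So $B_{g^m}(v_0,r)$ sits inside a Euclidean box $Q_r\subset\R^{d\times n}$ all of whose side lengths are bounded by $2r$ times an exponential in $r$.

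Next, by $\vol_{g^m}=\sqrt{\det\mathbf{G^m}}\dd V$, we have
\begin{displaymath}
    V_{g^m}(v_0,r)=\int_{B_{g^m}(v_0,r)}\sqrt{\det\mathbf{G^m}(v)}\dd V(v)\leq\int_{Q_r}\sqrt{\det\mathbf{G^m}(v)}\dd V(v).
\end{displaymath}
On $Q_r$, Lemma~\ref{lem:volume_control_via_edges} bounds $\det\mathbf{G^m}(v)$ by a rational function of the edge lengths whose denominator only has factors of the form $|e_i(v)|$, $|e_i(v)|+|e_{i-1}(v)|$, and $l(v)=\sum|e_i(v)|$. Since all edge lengths on $Q_r$ are pinched between $C_0 e^{-r/2^{m-1}}$ and $C_1 e^{r/2^{m-1}}$, every such denominator factor is bounded below by a positive multiple of $e^{-r/2^{m-1}}$ and every numerator factor is bounded above by a positive multiple of $e^{r/2^{m-1}}$; hence $\det\mathbf{G^m}(v)$ is bounded on $Q_r$ by $\mathrm{const}\cdot e^{N r/2^{m-1}}$ for some fixed integer $N$ depending only on $d,n,m$. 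Therefore $\sqrt{\det\mathbf{G^m}}\leq \mathrm{const}\cdot e^{(N/2)r/2^{m-1}}$ on $Q_r$, and integrating this constant-in-$v$ bound over $Q_r$ gives $V_{g^m}(v_0,r)\leq \mathrm{vol}_{\mathrm{Euc}}(Q_r)\cdot\mathrm{const}\cdot e^{(N/2)r/2^{m-1}}\leq (2r)^{dn}D_0 e^{D_1 r}$ after absorbing the polynomial-times-exponential side-length bound of $Q_r$ and collecting all exponential factors into a single constant $D_1$ and all multiplicative constants into $D_0$.

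The main obstacle I expect is not the volume-form estimate — which is essentially bookkeeping once Lemmas~\ref{lem:volume_control_via_edges} and~\ref{lem:edge_growth} are in hand — but rather making rigorous the claim that $B_{g^m}(v_0,r)$ is contained in a Euclidean box whose side lengths grow only like $2r$ times an exponential; in particular, controlling the Euclidean displacement of the overall position of the curve (as opposed to its edge vectors, which Lemma~\ref{lem:edge_growth} handles directly) requires a separate short argument bounding the $g^m$-cost of translating the configuration, using that the first (zeroth-order) term of $g_v^m$ controls $|h|^2/l(v)^3$ times the pinched weights. I would carry this out by noting that along a unit-speed geodesic the base vertex moves at Euclidean speed at most $\mathrm{const}\cdot l(v)^{3/2}$, and $l(v)$ is pinched by Lemma~\ref{lem:edge_growth}, so over geodesic length $r$ the base vertex moves at most $\mathrm{const}\cdot r\cdot e^{(3/2)r/2^{m-1}}$; this is again of the advertised form.
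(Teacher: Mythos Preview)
Your overall strategy is the paper's: use Lemma~\ref{lem:volume_control_via_edges} for the rational structure of $\det\mathbf{G^m}$, use Lemma~\ref{lem:edge_growth} to pinch the edge lengths on the geodesic ball, and conclude that $\sqrt{\det\mathbf{G^m}}$ is bounded on $B_{g^m}(v_0,r)$ by a constant times an exponential in $r$; then multiply by a bound on the Euclidean measure of the ball. You are in fact more explicit than the paper about the last step --- the paper simply writes the factor $(2r)^{dn}$ without justification, whereas you argue (via the base-vertex-plus-edges change of coordinates, together with the zeroth-order term of $g^m$ controlling translations) that $B_{g^m}(v_0,r)$ sits in a Euclidean box of side at most a constant times $r\,e^{cr}$, and then absorb the extra exponential into $D_1$.

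There is, however, a genuine slip in the order of your bounds. You first enlarge the domain of integration from $B_{g^m}(v_0,r)$ to $Q_r$ and \emph{then} bound the integrand by asserting that ``all edge lengths on $Q_r$ are pinched between $C_0 e^{-r/2^{m-1}}$ and $C_1 e^{r/2^{m-1}}$''. That assertion is false: the pinching from Lemma~\ref{lem:edge_growth} is valid only on the geodesic ball, while your box $Q_r$ is defined by bounds on the edge \emph{vectors}, so points of $Q_r$ can have arbitrarily short edges. The denominator factors $|e_i(v)|$ and $|e_i(v)|+|e_{i-1}(v)|$ can therefore vanish on $Q_r$, and $\int_{Q_r}\sqrt{\det\mathbf{G^m}}\dd V$ may well be infinite. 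The fix is simply to reverse the order: first use the pinching on $B_{g^m}(v_0,r)$ itself to get $\sqrt{\det\mathbf{G^m}(v)}\leq D_0 e^{D_1 r}$ there, and only afterwards invoke $B_{g^m}(v_0,r)\subset Q_r$ to bound the Euclidean measure of the domain of integration by $\vol_{\mathrm{Euc}}(Q_r)$. This is precisely how the paper arranges it.
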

\begin{proof}
    By Lemma~\ref{lem:volume_control_via_edges}, there exist polynomial functions $P_1$ and $P_2$ such that, for all $v\in\R^{d\times n}_*$,
    \begin{displaymath}
        \det\mathbf{G^m}(v)
        \leq\frac{P_1(|e_0(v)|,\dots,|e_{n-1}(v)|)}{P_2(|e_0(v)|,\dots,|e_{n-1}(v)|)},
    \end{displaymath}
    where the polynomial function $P_2$ only has factors of the form $|e_i(v)|$, $|e_i(v)|+|e_{i-1}(v)|$ and $\sum_{i=0}^{n-1}|e_i(v)|$. Since $\det\mathbf{G^m}(v)>0$ everywhere, we may further assume that, for all $v\in\R^{d\times n}_*$,
    \begin{displaymath}
        P_1(|e_0(v)|,\dots,|e_{n-1}(v)|)>0
        \quad\text{and}\quad
        P_2(|e_0(v)|,\dots,|e_{n-1}(v)|)>0.
    \end{displaymath}
    Due to the stated property of the polynomial function $P_2$, it follows thanks to the lower bound in Lemma~\ref{lem:edge_growth} that there exist constants $D_2,D_3>0$ such that, for all $r>0$ and all $v\in B_{g^m}(v_0,r)$,
    \begin{displaymath}
        P_2(|e_0(v)|,\dots,|e_{n-1}(v)|)\geq D_2\exp\left(-D_3 r\right).
    \end{displaymath}
    The triangle inequality and the upper bound from Lemma~\ref{lem:edge_growth} further show that there exist constants $D_4,D_5>0$ such that, for all $r>0$ and all $v\in B_{g^m}(v_0,r)$,
    \begin{displaymath}
        P_1(|e_0(v)|,\dots,|e_{n-1}(v)|)\leq D_4\exp\left(D_5 r\right).
    \end{displaymath}
    Putting all bounds together implies that there exist constants $D_0,D_1>0$ such that, for all $r>0$ and all $v\in B_{g^m}(v_0,r)$,
    \begin{displaymath}
        \sqrt{\det\mathbf{G^m}(v)}\leq D_0\exp\left(D_1 r\right).
    \end{displaymath}
    We conclude that, for all $r>0$,
    \begin{displaymath}
        V_{g^m}(v_0,r)
        =\int_{B_{g^m}(v_0,r)}\vol_{g^m}(v)
        =\int_{B_{g^m}(v_0,r)}\sqrt{\det\mathbf{G^m}(v)}\dd V
        \leq \left(2r\right)^{dn}D_0\exp\left(D_1 r\right)
    \end{displaymath}
    to obtain the claimed result.
\end{proof}

Finally, we use the volume growth control for geodesic balls provided by Proposition~\ref{propn:growthcontrol} to prove Theorem~\ref{thm:stocomplete}.

\begin{proof}[Proof of Theorem~\ref{thm:stocomplete}]
    By Theorem~\ref{thm:geocomplete}, the space $\R_*^{d\times n}$ equipped with $g^m$ is geodesically complete for all $m\geq 2$. Moreover, by using Proposition~\ref{propn:growthcontrol}, we deduce that, for $v_0\in\R_*^{d\times n}$ and $a>0$,
    \begin{displaymath}
        \int_a^\infty\frac{r\dd r}{\log V_{g^m}(v_0,r)}
        \geq \int_a^\infty\frac{r\dd r}{D_1 r+dn\log\left(2r\right)+\log(D_0)}=\infty.
    \end{displaymath}
    The criterion by Grigor'yan stated in Theorem~\ref{thm:grigoryan} then implies that, for all $m\geq 2$, the space $\R_*^{d\times n}$ equipped with $g^m$ is stochastically complete.
\end{proof}

Our proof technique does not allow us to analyze the cases $m=0$ and $m=1$. It is also unclear whether the result~\cite[Theorem~4.2]{NN2} by Nenciu and Nenciu, which provides a criterion for a geodesically incomplete Riemannian manifold to be stochastically complete, could be applicable in this setting because the volume growth control obtained in Proposition~\ref{propn:growthcontrol} currently requires the assumption $m\geq 2$.

\section{Numerical simulations}\label{sec:simulations}
We perform numerical experiments that approximately simulate Brownian motion on the space $\mathbb{R}_*^{d\times n}$ of discrete regular curves equipped with the Riemannian metric $g^m$ for various $m\in\Z_{\geq 0}$. The Brownian paths are generated by using the Euler--Maruyama scheme to 
discretize the stochastic differential equation for the stochastic process with generator $\frac{1}{2}\Delta^{d\times n,m}$.

Starting from an initial configuration $v^0 \in \mathbb{R}_*^{d\times n}$, we want to construct a sequence $(v^k)_{k\ge0}$ according to, for a suitable diffusion matrix $\sigma$ and a drift term $b$,
\begin{displaymath}
    v^{k+1} = v^k + \Delta t\, b(v^k) + \sqrt{\Delta t}\,\sigma(v^k)\,\xi^k,
\end{displaymath}
where $\xi^k \sim \mathcal{N}(0,I_{d\times n})$ are independent standard Gaussian random variables. For this to correspond to Brownian motion on $(\mathbb{R}_*^{d\times n},g^m)$, the diffusivity $\sigma$ determined by the Riemannian metric $g^m$ needs to satisfy $\sigma\sigma^\top=(\mathbf{G}^m)^{-1}$, which can be computed via the Cholesky factorization of the inverse metric tensor $(\mathbf{G}^m)^{-1}$. Moreover, the drift term $b$ which arises from the geometry of the Riemannian manifold $(\mathbb{R}_*^{d\times n},g^m)$ can be written, in terms of index notation, as
\begin{displaymath}
    b^j=\frac{1}{2}\frac{1}{\sqrt{\det g^m}}\frac{\partial}{\partial x^i}
    \left(\sqrt{\det g^m} (g^m)^{ij}\right)= \frac{1}{2}\frac{\partial}{\partial x^i}(g^m)^{ij}+\frac{1}{2}g^{ij}\frac{\partial}{\partial x^i}\log\sqrt{\det g^m}.
\end{displaymath}
Hence, we can approximate $b$ efficiently by computing $\nabla \log\det(\mathbf{G}^m)$ and $\nabla (\mathbf{G}^m)^{-1}$ via automatic differentiation. We present pseudo-code for the discussed approach in Algorithm \ref{alg:BM}.

\begin{algorithm}[H]
\caption{Simulation of Brownian Motion on $(\mathbb{R}_*^{d\times n}, g^m)$ via Euler--Maruyama Scheme}\label{alg:BM}
\begin{algorithmic}[1]
\Require Initial configuration $v^0 \in \mathbb{R}_*^{d\times n}$, time step $\Delta t$, number of steps $N$
\For{$k = 0$ to $N-1$}
    \State Compute the metric tensor $\mathbf{G}^m(v^k)$
    \State Compute its inverse $(\mathbf{G}^m)^{-1}(v^k)$
    \State Obtain diffusion matrix $\sigma(v^k)$ via Cholesky factorization:
        \[
        \sigma(v^k)\sigma(v^k)^\top = (\mathbf{G}^m)^{-1}(v^k)
        \]
    \State Compute drift term $b(v^k)$\\
        \Comment{Efficiently computed via automatic differentiation of $\log\det(\mathbf{G}^m)$ and $(\mathbf{G}^m)^{-1}$}
    \State Sample $\xi^k \sim \mathcal{N}(0, I_{d\times n})$
    \State Update configuration:
        \[
        v^{k+1} = v^k + \Delta t\, b(v^k) + \sqrt{\Delta t}\, \sigma(v^k)\, \xi^k
        \]
\EndFor
\State \Return $\{v^k\}_{k=0}^{N}$
\end{algorithmic}
\end{algorithm}

\subsection{Simulations of Brownian motion}

We first visualize Brownian motion initialized at several different discrete regular curves. Figure~\ref{fig:brownian} shows three examples in the space of discrete regular curves with respect to the metric $g^2$. The evolution of the centroid of each curve is plotted for $t \in [0,10]$ with step size $\Delta t = 0.01$, along with the corresponding trajectory modulo translation.

\begin{figure}[h]
\centering
\includegraphics[width=\textwidth]{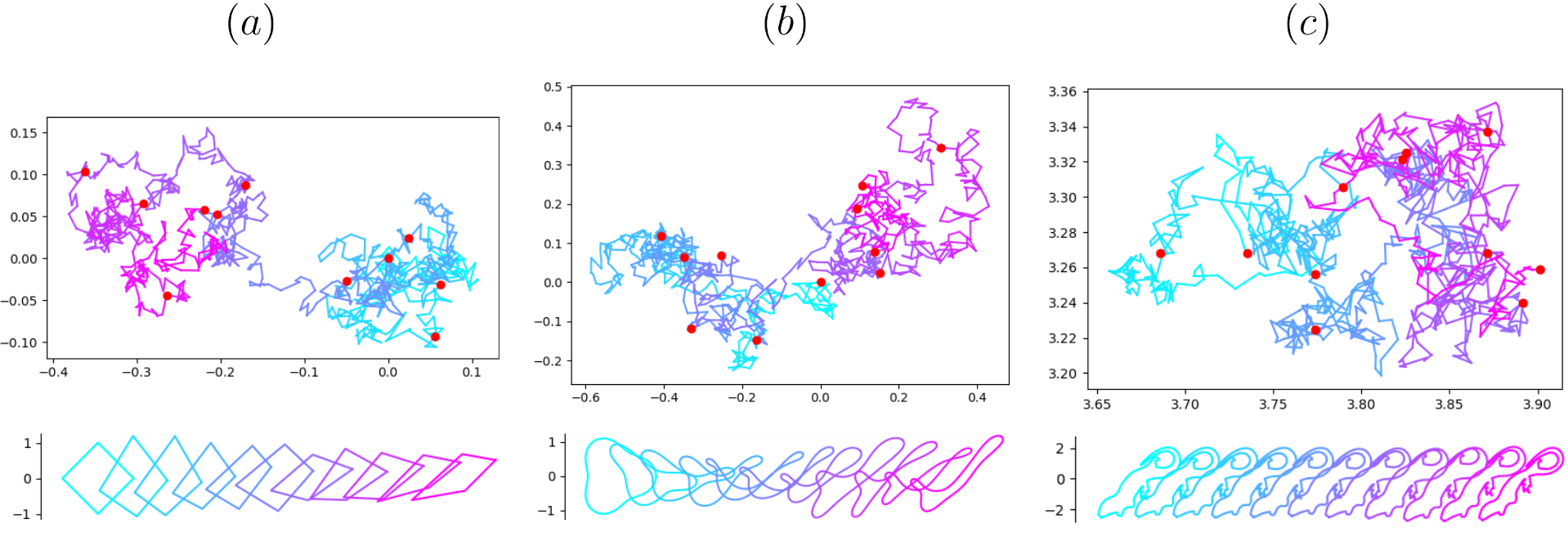}
\caption{Three examples of Brownian motion on the space of discrete regular curves with respect to the metric $g^2$. 
Top: Evolution of the centroid of each curve from $t=0$ to $t=10$ with step size $\Delta t =0.01$. 
Bottom: Corresponding trajectories of curves modulo translation. We plot every 10th curve corresponding to the centroid highlighted in red on the top plot.  
(a) Initialization at a square with $n=4$ points. 
(b) Initialization at a smooth curve with $n=100$. 
(c) Initialization at a shape from the MPEG7 dataset (lizard) with $n=100$.}
\label{fig:brownian}
\end{figure}

\subsection{Comparison of metric orders}

To further investigate the effect of the metric, we compare Brownian motion on a space of discrete regular curves in $\R^2$ for large $n$ and endowed with the 1st-, 2nd-, and 4th-order Sobolev-type metric, respectively. 
This experiment illustrates how the order of the metric influences regularity, diffusion rate and the preservation of geometric structure. 
In Figure~\ref{fig:orders}, we show qualitative examples of Brownian motion initialized at the unit circle in $\R^2$ with $n=100$ for the metrics $g^1$, $g^2$ and $g^4$, respectively.
In addition, we plot the minimum edge length of the evolving curve over time across multiple runs, providing empirical evidence supporting our theoretical results. Notably, even for the metric $g^1$, we have not observed collapse of edge lengths to zero over a time period from $t=0$ to $t=1000$. This is in line with our subsequent heuristic discussion regarding stochastic completeness for the space of triangles in the plane modulo rotation, translation and scaling.

\begin{figure}[ht]
\centering
\includegraphics[width=\textwidth]{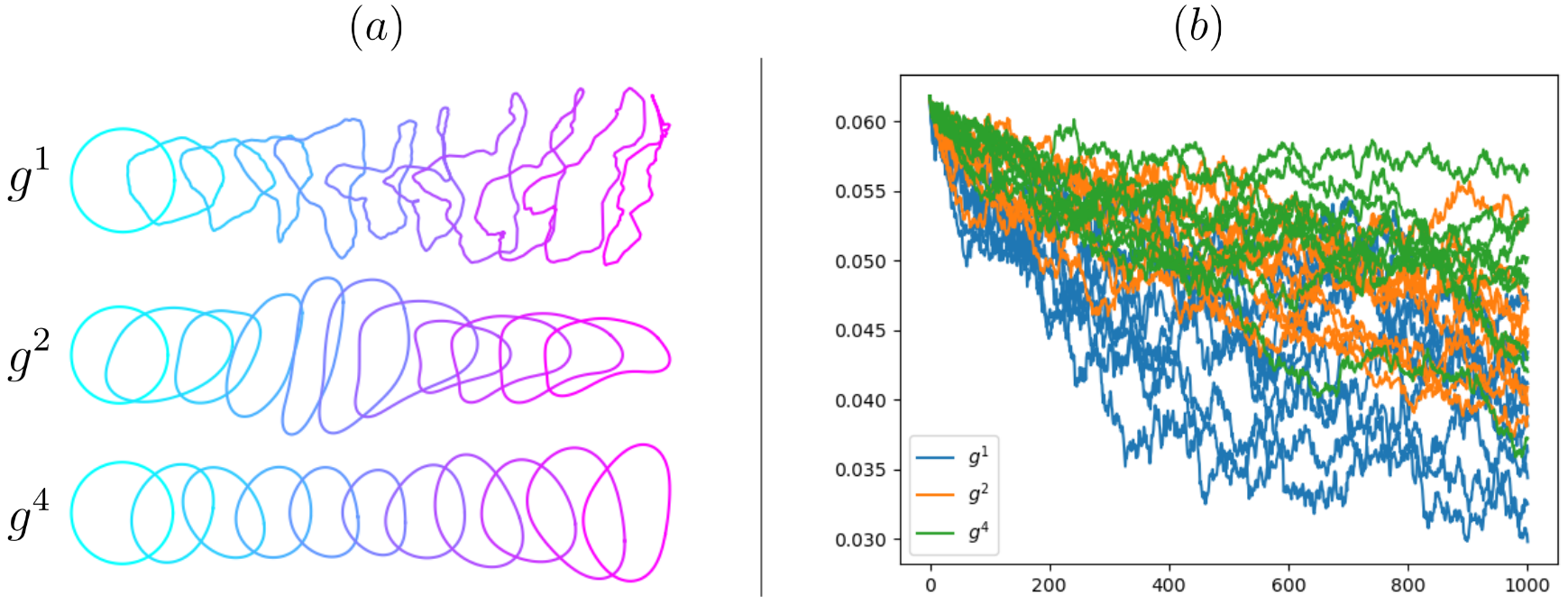}
\caption{(a) Brownian motion on the space of discrete regular curves in $\R^2$ for $n=100$ with respect to 1st-, 2nd-, and 4th-order Sobolev-type metrics, initialized at the unit circle discretized with $100$ points. 
(b) Evolution of the minimum edge length over time for $10$ independent simulations from $t=0$ to $t=1000$ each with respect to $g^1$, $g^2$ and $g^4$. }
\label{fig:orders}
\end{figure}

\section{Case study -- Triangles modulo rotation, translation and scaling}
\label{sec:triangles}

We provide a heuristic discussion of the stochastic completeness properties for the space of triangles modulo rotation, translation and scaling, which exploits a conformal structure underlying this setting and which particularly covers the cases $m\in\{0,1\}$.

As illustrated in the numerical simulations in the previous section, letting a discrete regular curve evolve according to Brownian motion on a space $\R_*^{d\times n}$ of discrete regular curves equipped with a Riemannian metric $g^m$ results both in a random movement of the curve and a random change of the edge lengths of the curve. If one is less interested in the random movement of the curve itself, it can be beneficial to consider discrete regular curves modulo rotation, translation and scaling. This constitutes a submanifold of the space $\R_*^{d\times n}$ that can be endowed with a Riemannian metric by restricting $g^m$ to the submanifold, which does not correspond to the inherited quotient metric.

The space of triangles modulo rotation, translation and scaling gives rise to a nice toy example, where one can gain insight into the geometry near singularities and, thereby, into both geodesic and stochastic completeness properties. We keep the discussion heuristic as it exploits a conformal structure which is not available in general. Moreover, while we expect our volume growth control for $m\geq 2$ to carry through to the spaces of discrete regular curves modulo rotation, translation and scaling, their geodesic completeness properties have not been studied formally in~\cite{cerqueira2024sobolevmetricsspacesdiscrete}.

When considering triangles in the plane modulo rotation, translation and scaling, we may take $v_0=(1,0)$ after translation, $|e_2(v)|=2$ after scaling, and $v_2=(-1,0)$ after rotation. This results in the discrete space $\overline{\R}^{2\times 3}_* = \R^2\setminus \{(1,0),(-1,0)\}$, with tangent spaces $T_v\overline{\R}^{2\times 3}_*=\R^2$ for $v\in\overline{\R}^{2\times 3}_*$. By identifying $h\in T_v\overline{\R}^{2\times 3}_*$ with $(0,h,0)\in T_v\R^{2\times 3}_*$, we can determine the restriction of the Riemannian metric $g^m$ to the submanifold $\overline{\R}^{2\times 3}_*$. It turns out to be of the form $f_m(v)\langle\cdot,\cdot\rangle$, that is, a metric conformal to the Euclidean metric on $\R^2\setminus \{(1,0),(-1,0)\}$ with conformal factor $f_m\colon\R^2\setminus \{(1,0),(-1,0)\}\to\R_{\geq 0}$. For $m\geq 1$, the conformal factor develops singularities at the points $(1,0)$ and $(-1,0)$, which corresponds to the vertex $v_1$ coinciding with $v_0$ and $v_2$, respectively. We obtain 
\begin{align*}
    f_0(v)&=\frac{|e_0(v)|+|e_1(v)|}{l(v)^3}, \\
    f_1(v)&=\frac{|e_0(v)|+|e_1(v)|}{2l(v)^3}+\left(\frac{1}{|e_0(v)|}+\frac{1}{|e_1(v)|}\right)\frac{1}{l(v)} \quad\text{and}\\
    f_2(v)&=\frac{|e_0(v)|+|e_1(v)|}{2l(v)^3}+\left(\frac{2}{|e_0(v)|^2(|e_0(v)|+2)}+\frac{2(|e_0(v)|+|e_1(v)|)}{|e_0(v)|^2|e_1(v)|^2}+\frac{2}{|e_1(v)|^2(|e_1(v)|+2)}\right)l(v).
\end{align*}

We now study how these conformal factors behave near $(1,0)$ and $(-1,0)$. By symmetry, it suffices to consider the case where the vertex $v_1$ approaches the vertex $v_0$, that is, where $|e_0(v)|=r$ becomes small. We first observe that $|e_1(v)|=2+o(1)$ as $r\downarrow 0$ and then further compute that, as $r\downarrow 0$, 
\begin{displaymath}
    f_0(v)=\frac{1+o(1)}{32},\quad
    f_1(v)=\frac{1+o(1)}{4r}\quad\text{and}\quad
    f_2(v)=\frac{8+o(1)}{r^2}.
\end{displaymath}
Thus, locally near $(1,0)$ and $(-1,0)$, respectively, the conformal factor corresponds to a perturbation of a scaled Euclidean metric for $m=0$, a standard cone with its tip at the singularity for $m=1$ and a cylinder for $m=2$. The conformal factors $f_0$, $f_1$ and $f_2$ are plotted in Figure~\ref{fig:conformalfactors}, together with an illustration of their behavior near the vertex $v_0=(1,0)$.

\begin{figure}[h]
\centering
\includegraphics[width=.49\textwidth]{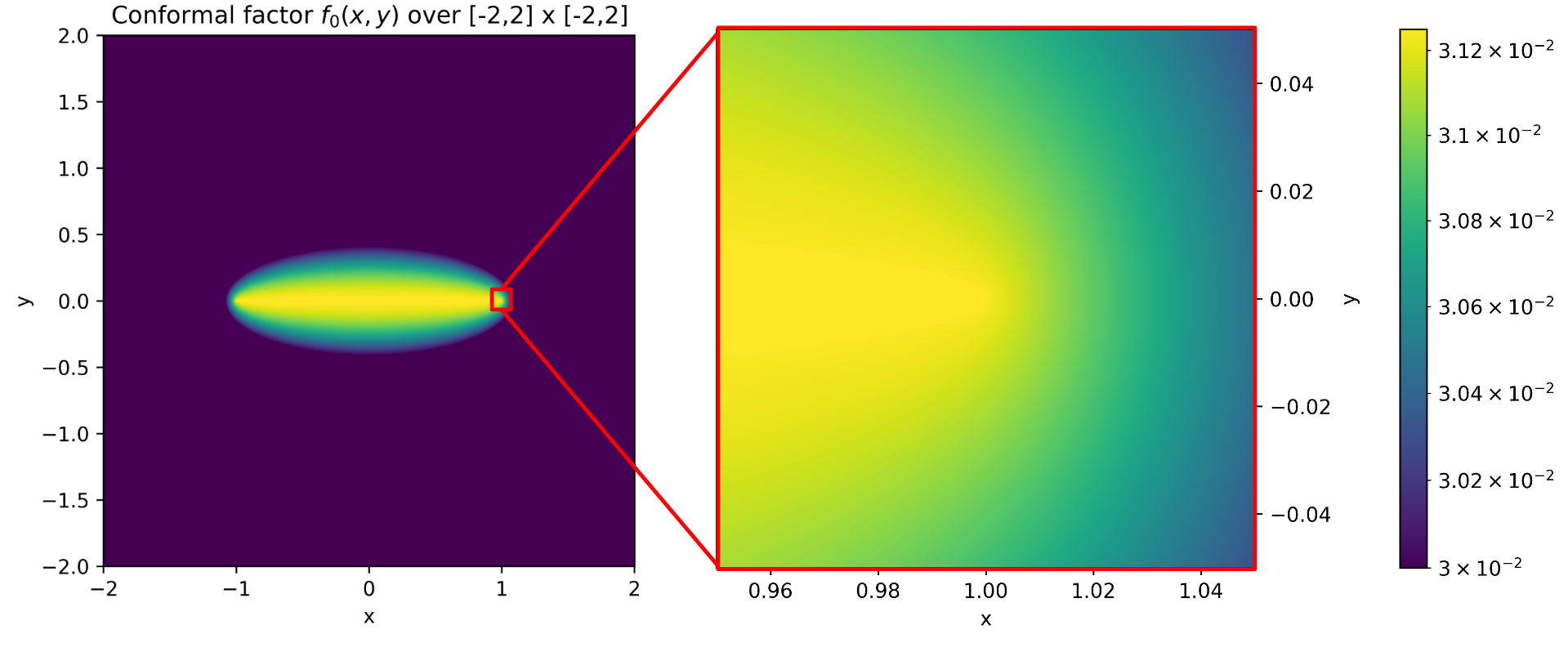}
\includegraphics[width=.49\textwidth]{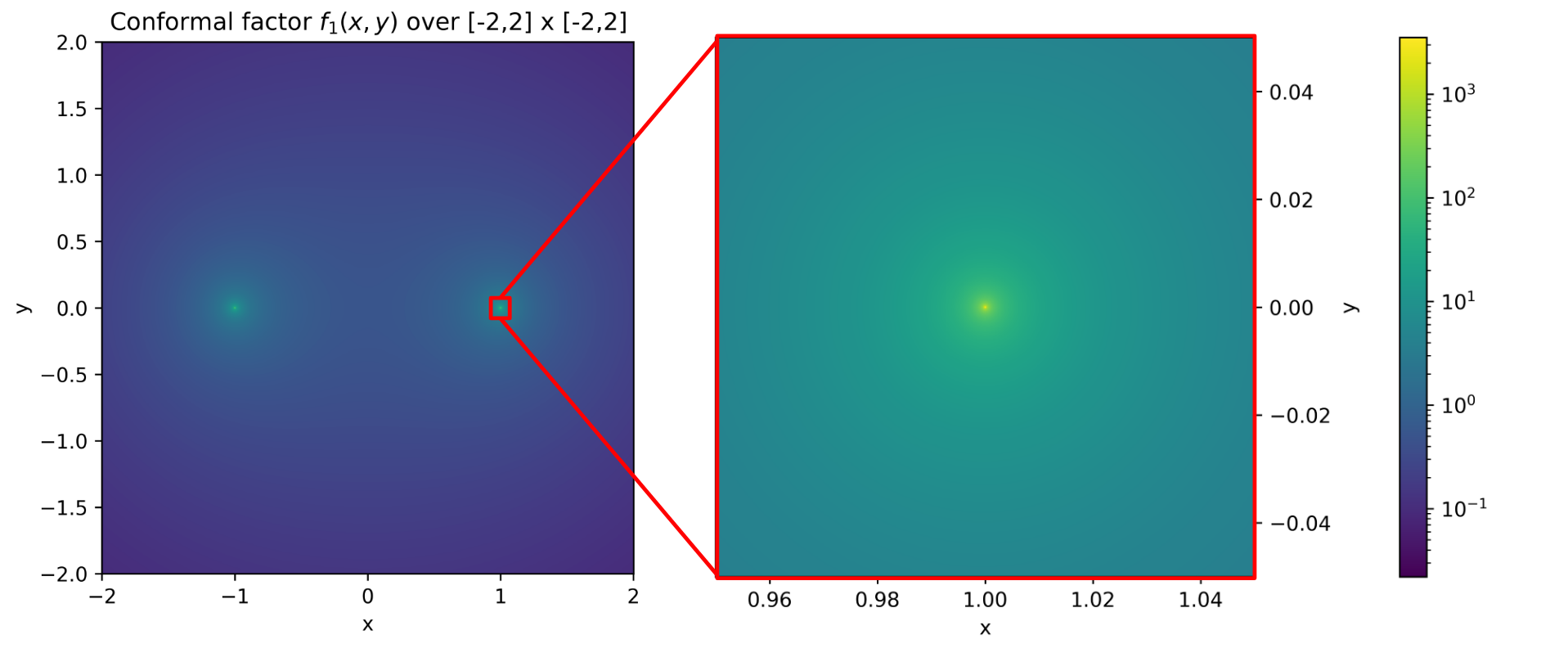}\\
\includegraphics[width=.49\textwidth]{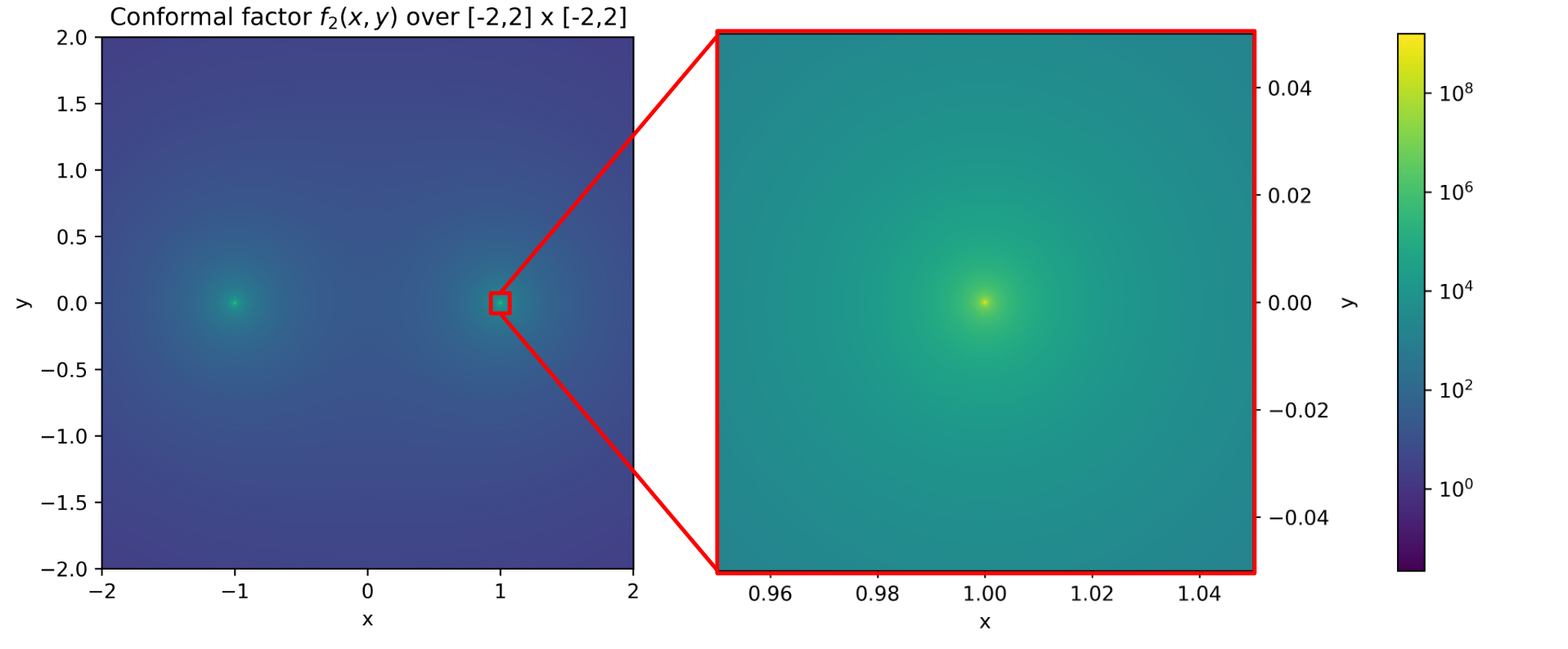}
\caption{Plots of the conformal factors $f_0$, $f_1$ and $f_2$ corresponding to the restrictions of the metrics $g^0$, $g^1$ and $g^2$ to the space of triangles with two fixed vertices. We display the conformal factors on $[-2,2]\times[-2,2]$ as well as a small region around the singularity at $v_0=(1,0)$.}
\label{fig:conformalfactors}
\end{figure}

As each iteration in the definition of the metric $g^m$ introduces one additional division by $|e_0(v)|$, we expect that, more generally, for all $m\in\Z_{\geq 0}$, there exists a constant $C_m>0$ such that, as $r\downarrow 0$,
\begin{displaymath}
    f_m(v)=\frac{C_m+o(1)}{r^m}.
\end{displaymath}
In particular, for $m\geq 3$, concentric circles around the singularity have increasing circumferences as their Euclidean radius decreases, and the conformal factor corresponds to a perturbation of a structure that opens up.

These observations that close to $(1,0)$ and $(-1,0)$ the space $\overline{\R}^{2\times 3}_*$ equipped with the restriction of $g^m$ essentially looks like a plane for $m=0$, a cone for $m=1$, a cylinder for $m=2$ and an opening structure for $m\geq 3$ is in line with the expectation that we have geodesic completeness if and only if $m\geq 2$ as well as stochastic completeness if $m\geq 2$. However, what is intriguing is that these correspondences also suggest that $\overline{\R}^{2\times 3}_*$ remains stochastically complete even for $m\in\{0,1\}$ because both the punctured plane and a cone with its tip removed are stochastically complete, despite them being geodesically incomplete. The work~\cite{HPS} provided a family of shape spaces that are geodesically complete yet stochastically incomplete. We now have with $(\overline{\R}^{2\times 3}_*,g^0)$ and $(\overline{\R}^{2\times 3}_*,g^1)$ first candidates for shape spaces that are geodesically incomplete yet stochastically complete. For a rigorous analysis, it will also be important to study possible explosion towards Euclidean infinity.

\bibliographystyle{abbrv}

\end{document}